\documentclass[journal,web]{ieeecolor}
\pagestyle{empty}
\usepackage{multirow}
\usepackage{float}
\usepackage{generic}
\usepackage{cite}
\usepackage{amsmath,amssymb,amsfonts}
\usepackage{graphicx}
\usepackage{algorithm}
\usepackage{soul} 
\usepackage{xcolor}
\usepackage{tikz} 
\usepackage{color}
\usepackage{graphicx}
\graphicspath{{figures/}{./}}
\usepackage{textcomp}
\usepackage{trimclip}
\usepackage[mathscr]{euscript}
\usepackage{array}
\usepackage{eqparbox}
\usepackage{url}
\usepackage{relsize}
\usepackage{dsfont}
\usepackage{nopageno}

\usepackage{epsfig}
\usepackage{amssymb}
\usepackage[mathscr]{euscript}

\usepackage{tikz-cd}
\usepackage{algorithm}
\usepackage{algpseudocode}

\newtheorem{theorem}{Theorem}

\newtheorem{proposition}{Proposition}

\newtheorem{lemma}{Lemma}
\usepackage{lipsum} 
\usepackage{booktabs}

\def\BibTeX{{\rm B\kern-.05em{\sc i\kern-.025em b}\kern-.08em
T\kern-.1667em\lower.7ex\hbox{E}\kern-.125emX}}

\begin{document}

\title{Model Reduction of Homogeneous Polynomial Dynamical Systems via Tensor Decomposition}

\author{Xin Mao, Can Chen
\thanks{Xin Mao is with the School of Data Science and Society, University of North Carolina at Chapel Hill, Chapel Hill, NC 27599, USA (email: xinm@unc.edu).}
\thanks{Can Chen is with the School of Data Science and Society, the Department of Mathematics, and the Department of Biostatistics,  University of North Carolina at Chapel Hill, Chapel Hill, NC 27599, USA (email: canc@unc.edu).}%
}

\maketitle

\thispagestyle{empty}
\pagestyle{empty}

%%%%%%%%%%%%%%%%%%%%%%%%%%%%%%%%%%%%%%%%%%%%%%%%%%%%%%%%%%%%%%%%%%%%%%%%%%%%%%%%
\begin{abstract}
Model reduction plays a critical role in system control, with established methods such as balanced truncation widely used for linear systems. However, extending these methods to nonlinear settings, particularly polynomial dynamical systems that are often used to model higher-order interactions in physics, biology, and ecology, remains a significant challenge. In this article, we develop a novel model reduction method for homogeneous polynomial dynamical systems (HPDSs) with linear input and output grounded in tensor decomposition. Leveraging the inherent tensor structure of HPDSs, we construct reduced models by extracting dominant mode subspaces via higher-order singular value decomposition. Notably, we establish that key system-theoretic properties, including stability, controllability, and observability, are preserved in the reduced model. We demonstrate the effectiveness of our method using numerical examples.
\end{abstract}

\begin{IEEEkeywords}
 Model reduction, homogeneous polynomial dynamical systems, tensor decomposition, stability, controllability, observability.
\end{IEEEkeywords}
%%%%%%%%%%%%%%%%%%%%%%%%%%%%%%%%%%%%%%%%%%%%%%

\section{Introduction}
Many real-world systems, including social networks \cite{cencetti2021temporal,alvarez2021evolutionary,BOCCALETTI20231}, ecological networks \cite{grilli2017higher,chen2024stability, bairey2016high}, and chemical reaction networks \cite{chen2022explicit,mann2023ai,10535389}, exhibit higher-order interactions that can be effectively captured using polynomial dynamical systems or homogeneous polynomial dynamical systems (HPDSs). Such systems are typically high-dimensional, posing significant computational challenges for analysis, simulation, and control. Model reduction offers a powerful strategy to address these challenges by approximating the original high-dimensional system with a lower-dimensional surrogate that retains essential dynamical features and system-theoretic properties \cite{penzl2006algorithms,benner2015survey,bassi2003dynamical,baur2014model,touze2021model,kawano2016model,cheng2018model}. It has demonstrated considerable success across a wide range of applications, including fluid dynamics \cite{lassila2014model,carlberg2013gnat}, structural health monitoring \cite{taddei2018simulation,sengupta2025state}, and coarse-graining in multiscale modeling \cite{gorban2006model,budarapu2014efficient}. However, traditional model reduction methods such as balanced truncation are primarily tailored to linear systems and do not readily extend to the nonlinear, polynomial context.

Several efforts have been devoted to developing model reduction methods for nonlinear systems. Notable examples include empirical balanced truncation \cite{condon2004empirical}, nonlinear balanced truncation \cite{sahyoun2013reduced}, moment matching \cite{astolfi2010model}, proper orthogonal decomposition \cite{pinnau2008model}, and techniques based on Koopman operator theory \cite{peitz2019koopman,santos2021reduced}. While these methods have shown promise in specific settings, many face limitations when applied to systems with strong nonlinearities, such as those with high polynomial degrees. Prajna and Sandberg \cite{prajna2005model} developed a computational method for model reduction of polynomial dynamical systems by leveraging sum-of-squares relaxations applied to specific Lyapunov inequalities, which act as nonlinear analogues to the Lyapunov controllability and observability linear matrix inequalities used in linear systems theory. However, this approach requires computing polynomial controllability and observability Gramians through sum-of-squares programming, rendering it computationally intractable for large-scale systems. This highlights a pressing need for scalable and efficient model reduction methods tailored to polynomial dynamical systems.

Recently, tensor-based methods have garnered increasing attention for applications to HPDSs, as every HPDS admits an equivalent representation in tensor form \cite{chen2022explicit,chen2024stability2}. This tensor representation enables the efficient analysis of key system properties, such as stability, controllability, and observability, in a compact and mathematically tractable framework \cite{mao2025tensor, chen2021controllability,10540364,10910206,pickard2023observability}. As a result, tensor decomposition emerges as a promising and effective tool for performing model reduction in HPDSs, a direction that, to the best of our knowledge, has not yet been explored. Among the various tensor decomposition techniques available, we are interested in higher-order singular value decomposition (HOSVD), a natural multilinear generalization of the matrix SVD \cite{de2000multilinear,5447070}. A significant benefit of HOSVD lies in its generation of orthogonal factor matrices along each tensor mode, enabling structured dimensionality reduction while preserving the energy distribution and geometric structure of the original system. Additionally,  it facilitates coordinate transformations that simplify the analysis of stability, controllability, and observability in the reduced model.

In this article, we develop a novel model reduction method for HPDSs with linear input and output via tensor decomposition. Based on the fact that any polynomial dynamical system can be transformed into an HPDS through homogenization, the proposed method is readily applicable to general polynomial dynamical systems. The key contributions are as follows:
\begin{itemize}
    \item We perform model reduction  for tensor-based HPDSs with linear input and output by leveraging the compact HOSVD of the system's dynamic tensor, obtaining a reduced model that preserves dominant tensor modes. 
    \item We establish that key system-theoretic properties, including stability, controllability, and observability, are preserved in the reduced model under certain structural assumptions of the dynamic tensor.
    \item We verify the effectiveness of the proposed  model reduction method through numerical simulations. 
\end{itemize}
Model reduction for HPDSs or  polynomial dynamical systems has broad potential applications in domains such as network dynamics, where complex higher-order interactions and high-dimensional state spaces are prevalent. For example, in ecological modeling, reduced polynomial models can support the coarse-graining of large-scale  network dynamics, enabling scalable analysis, efficient simulation, and informed decision-making for ecological conservation \cite{https://doi.org/10.1111/ele.12893,Tsoumanis_2012}.

This article is organized into six sections. Section~\ref{sec:pre} reviews essential concepts in tensor algebra, including tensor unfolding, tensor matrix/vector multiplication, tensor eigenvalues, and tensor decomposition. Section~\ref{sec:mr} details the proposed model reduction procedure for HPDSs with linear input and output, leveraging HOSVD. Section~\ref{sec:pro} establishes theoretical results showing that key system-theoretic properties are preserved in the reduced model. Section~\ref{sec:num} presents numerical examples to demonstrate the effectiveness of the proposed method. Finally, Section~\ref{sec:con} presents concluding remarks and highlights future research directions.

\section{Tensor Preliminaries}\label{sec:pre}
Tensors are multidimensional arrays that generalize vectors and matrices \cite{kolda2009tensor,7891546,brand2020vector,chen2024tensor}. The number of dimensions of a tensor is called  order, and each dimension is referred to as a mode. A $k$th-order tensor is often denoted by $\mathscr{A} \in \mathbb{R}^{n_1 \times n_2 \times \cdots \times n_k}$. A tensor is said to be cubical if all its modes have the same dimension, i.e., $n_1=n_2=\cdots=n_k$. Moreover, a cubical tensor $\mathscr{A}$ is called symmetric if its entries $\mathscr{A}_{j_1 j_2 \cdots j_k}$ remain invariant under any permutation of the indices, and it is termed almost symmetric if the entries are invariant under any permutation of the first $k - 1$ indices.

\subsection{Tensor Unfolding}
Tensor unfolding, also known as matricization or flattening, refers to the process of rearranging the entries of a tensor into a matrix \cite{ragnarsson2012block}. For a $k$th-order tensor $\mathscr{A} \in \mathbb{R}^{n_1 \times n_2 \times \cdots \times n_k}$, its $p$-mode unfolding, denoted by $\textbf{A}_{(p)}\in\mathbb{R}^{n_p\times(n_1n_2\cdots n_{p-1}n_{p+1}\cdots n_k)}$,  is obtained by mapping the $p$-mode fiber  of $\mathscr{A}$ (obtained by fixing all indices except the $p$th) to the columns of the resulting matrix while preserving the ordering of the other modes. Formally, the entries of $\mathscr{A}$ are defined as
\begin{equation*}
    (\textbf{A}_{(p)})_{j_pj} = \mathscr{A}_{j_1 j_2 \ldots j_k} \text{ where } j = j_1+\sum_{\substack{i=2 \\ i\neq p}}^k(j_i-1)\underset{\substack{l=1 \\ l\neq p}}{\overset{i-1}{\prod}}n_l.
\end{equation*}
This operation is fundamental in tensor computations, especially in tensor decomposition.

\subsection{Tensor Matrix/Vector Multiplication}
Tensor matrix multiplication is a generalization of matrix matrix multiplication to higher-order tensors \cite{kolda2009tensor}. Let \(\mathscr{A} \in \mathbb{R}^{n_1 \times n_2 \times \cdots \times n_k}\) be a \(k\)th-order tensor and \(\textbf{M} \in \mathbb{R}^{m \times n_p}\) be a matrix. The $p$-mode multiplication of \(\mathscr{A}\) with \(\textbf{M}\) along mode $p$, denoted by \(\mathscr{A} \times_p \textbf{M}\in\mathbb{R}^{n_1 \times  \cdots \times n_{p-1} \times m \times n_{p+1} \times \cdots \times n_k}\), is defined as
\[
(\mathscr{A} \times_p \textbf{M})_{j_1 \cdots j_{p-1} i j_{p+1} \cdots j_k} = \sum_{j_p=1}^{n_p} \mathscr{A}_{j_1 j_2 \cdots j_k} \textbf{M}_{i j_p}.
\]
When $m=1$, this operation  reduces to  $p$-mode tensor vector multiplication. Specifically, for a vector  $\textbf{v}\in\mathbb{R}^{n_p}$, $\mathscr{A} \times_p \textbf{v}\in\mathbb{R}^{n_1\times \cdots\times n_{p-1}\times n_{p+1}\times \cdots\times n_k}$ is defined as
\[
(\mathscr{A} \times_p \textbf{v})_{j_1 \cdots j_{p-1}  j_{p+1} \cdots j_k} = \sum_{j_p=1}^{n_p} \mathscr{A}_{j_1 j_2 \cdots j_k} \textbf{v}_{j_p}.
\]
Tensor matrix/vector multiplication can be applied sequentially along all modes of $\mathscr{A}$, which is known as the Tucker product.

\subsection{Tensor Eigenvalue}
The concept of eigenvalues for tensors generalizes the matrix eigenvalue problem \cite{qi2005eigenvalues,qi2007eigenvalues}. Several definitions exist, but one of the most widely used in the context of symmetric tensors is the Z-eigenvalue \cite{qi2005eigenvalues}. Let $\mathscr{A}\in \mathbb{R}^{n \times n \times \stackrel{k}{\cdots} \times n}$ be a $k$th-order symmetric tensor.  A real number $\lambda \in \mathbb{R}$ is called a Z-eigenvalue of $\mathscr{A}$ if there exists a nonzero vector $\textbf{x} \in \mathbb{R}^n$ (i.e., the corresponding Z-eigenvector) such that
\[
\mathscr{A}\times_1 \textbf{x}\times_2 \textbf{x}\times_3\cdots\times_{k-1} \textbf{x} = \lambda \textbf{x}, \quad \text{subject to } \|\textbf{x}\|_2 = 1.
\]
Every real symmetric tensor admits at least one Z-eigenvalue. Moreover, Z-eigenvalues are invariant under orthogonal similarity transformations. If $\textbf U \in \mathbb{R}^{n \times n}$ is an orthogonal matrix, $\mathscr{A}$ and $\mathscr{A} \times_1 \textbf U \times_2 \textbf U\times_3 \cdots \times_k \textbf U$ share the same Z-eigenvalues. The computation of Z-eigenvalues involves solving a constrained polynomial optimization problem, which is NP-hard in general. Nevertheless, many algorithms, such as power methods, polynomial optimization, and homotopy continuation, have been developed to compute or approximate them effectively.

\subsection{Tensor Decomposition}
Tensor decomposition refers to the process of expressing a tensor as a combination of simpler, lower-dimensional components that reveal its underlying multilinear structure \cite{kolda2009tensor,oseledets2011tensor,de2000multilinear}. Among the various decomposition techniques, the higher-order singular value decomposition (HOSVD) \cite{de2000multilinear} is of particular interest  due to its ability to produce orthogonal factor matrices along each mode, thereby facilitating structured dimensionality reduction and preserving key properties of the original tensor. Given a $k$th-order tensor $\mathscr{A} \in \mathbb{R}^{n_1 \times n_2 \times \cdots \times n_k}$, the HOSVD of $\mathscr{A}$ is expressed as
\begin{equation}\label{eq:hosvd}
    \mathscr{A} = \mathscr{S} \times_1 \textbf U_{1} \times_2 \textbf U_2 \times_3\cdots \times_k \textbf U_k,
\end{equation}
where $\mathscr{S}\in\mathbb R^{n_1\times n_2\times\dots\times n_k}$ is called the core tensor, and  $\textbf U_p\in\mathbb R^{n_p\times n_p}$ are  orthogonal matrices (see Fig. \ref{fig:hosvd}). Let $\mathscr{S}_{j_p=\alpha}$ denote the sub-tensor obtained by fixing the $p$th index to $\alpha$. The Frobenius norms $\|\mathscr{S}_{j_p=\alpha}\|$ are termed the $p$-mode higher-order singular values of $\mathscr{A}$, which satisfy the ordering 
\[\|\mathscr{S}_{j_p=1}\|\geq \|\mathscr{S}_{j_p=2}\|\geq \cdots\geq \|\mathscr{S}_{j_p=n_p}\|\geq 0
\]
for all $p$. When many of these higher-order singular values are zero or negligibly small, the HOSVD \eqref{eq:hosvd} admits a compact representation (analogous to compact or economy-size matrix SVD), i.e., 
\[
\mathscr{A} = \mathscr{S}_{\text{red}} \times_1 \textbf V_{1} \times_2 \textbf V_2 \times_3\cdots \times_k \textbf V_k,
\]
where $\mathscr{S}_{\text{red}}\in\mathbb{R}^{r_1\times r_2\times \cdots\times r_k}$ is the reduced core tensor, and $\textbf{V}_p\in\mathbb{R}^{n_p\times r_p}$ are factor matrices containing the first $r_p$ columns of $\textbf{U}_p$. The full/compact HOSVD can be computed by performing  matrix SVDs on the $p$-mode unfoldings of $\mathscr{A}$, where $\textbf{U}_p$ or $\textbf{V}_p$ are the left singular vector matrices of $\textbf{A}_{(p)}$.

\begin{figure}[t]
\begin{center}		
\includegraphics[width=\linewidth]{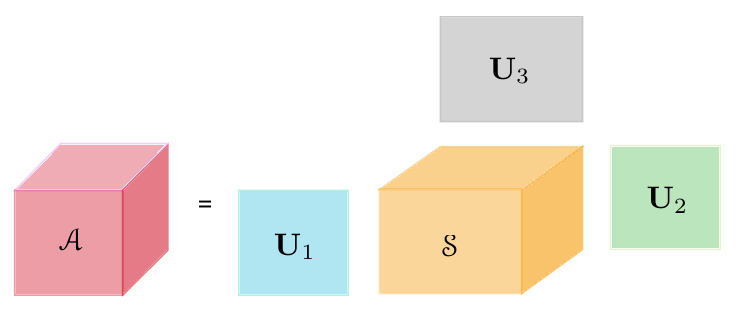}    % The printed column width is 8.4 cm.
\caption{An example of the HOSVD of a third-order tensor.}\label{fig:hosvd}
\end{center}
\end{figure}

Tensor orthogonal decomposition is a special case of HOSVD that applies specifically to symmetric tensors \cite{robeva2016orthogonal}. In this setting, the orthogonal factor matrices along each mode are identical (i.e., $\textbf{U}_1=\textbf{U}_2=\cdots=\textbf{U}_k=\textbf{U}$), and the resulting core tensor $\mathscr{S}$ becomes diagonal (i.e., $\mathscr{S}_{j_1j_2\cdots j_k}=0$ except $j_1=j_2=\cdots=j_k$). Given a $k$th-order symmetric tensor $\mathscr{A}\in \mathbb{R}^{n \times n \times \stackrel{k}{\cdots} \times n}$, the orthogonal decomposition of $\mathscr{A}$ can be equivalently represented as
\[
\mathscr{A} = \sum_{j=1}^n \lambda_j \textbf u_{j}\circ \textbf u_{j} \circ \stackrel{k}{\cdots} \circ\textbf u_{j},
\]
where $\circ$ denotes the outer product, $\lambda_j$ are the diagonal entries of $\mathscr{S}$, and $\textbf{u}_j\in\mathbb{R}^{n}$ are the $j$th columns of the common orthogonal factor matrix $\textbf{U}$. It has been shown that $\lambda_j$ are the Z-eigenvalues of $\mathscr{A}$ with the corresponding Z-eigenvectors $\textbf{u}_j$. A symmetric tensor that admits such a decomposition is referred to as orthogonally decomposable (odeco).

\section{Model Reduction of Input-Output HPDSs}\label{sec:mr}
Every homogeneous polynomial dynamical system (HPDS) of degree $k-1$ can be uniquely represented by a $k$th-order almost symmetric tensor via tensor vector multiplications along the first $k-1$ modes \cite{chen2022explicit,chen2024stability2}. Consider a continuous-time tensor-based HPDS with linear input and output 
\begin{align}\label{eq:syso}
\begin{cases}
\dot{\textbf x}(t)=\mathscr A\textbf x(t)^{k-1}+\textbf B\textbf u(t) \\
\textbf y(t)=\textbf C\textbf x(t)
\end{cases},
\end{align}
where $\mathscr A\in\mathbb{R}^{n\times n\times\stackrel{k}{\cdots}\times n}$ is an almost symmetric dynamic tensor, $\textbf B\in\mathbb{R}^{n\times m}$ is the control matrix, $\textbf C\in\mathbb{R}^{l\times n}$ is the output matrix. Here, $\textbf x(t)\in\mathbb R^n$ is the state, $\textbf u(t)\in\mathbb{R}^m$ is the control input, and $\textbf y(t)\in\mathbb{R}^l$ is the  output. For simplicity, we denote $\mathscr A\textbf x(t)^{k-1} = \mathscr{A}\times_1\textbf{x}(t)\times_2\textbf{x}(t)\times_3\cdots\times_{k-1}\textbf{x}(t)$.

 \begin{algorithm}[t]
\caption{HOSVD-based model reduction of tensor-based HPDSs with linear input and output}\label{alg:MOR}
\begin{algorithmic}[1]
\State Given an input-output tensor-based HPDS with dynamic tensor $\mathscr A\in\mathbb{R}^{n\times n\times\stackrel{k}{\cdots}\times n}$, input matrix $\textbf B\in\mathbb{R}^{n\times m}$, and output matrix $\textbf C\in\mathbb{R}^{l\times n}$
\State Compute the compact HOSVD of $\mathscr A$ with respect to the first $k-1$ modes, i.e.,
$\mathscr{A} = \mathscr{S} \times_1 \textbf V \times_2 \textbf V\times_3 \cdots \times_{k-1}\textbf V\times_k \textbf V_k$
\State Set the new latent state $\textbf z(t)=\textbf V^\top\textbf x(t)\in\mathbb R^r$
\State The reduced input-output HPDS with system parameters computed as
\begin{align*}
\mathscr{A}_{\text{red}}&=\mathscr{S}_{\text{red}}\times_k \textbf{V}^\top\textbf{V}_k\in\mathbb{R}^{r\times r\times \stackrel{k}{\cdots} \times r}\\
    \textbf{B}_{\text{red}}&=\textbf{V}^\top\textbf{B}\in\mathbb{R}^{r\times m}, \quad \textbf{C}_{\text{red}}= \textbf{C}\textbf{V}\in\mathbb{R}^{l\times r}
\end{align*}

\State\Return The reduced HPDS with linear input and output.
\end{algorithmic}
\end{algorithm}

Since the dynamic tensor $\mathscr{A}$ is almost symmetric, the compact HOSVD  of $\mathscr{A}$  is computed as
\[
\mathscr{A}= \mathscr{S}_{\text{red}} \times_1 \textbf V \times_2 \textbf V \times_3\cdots \times_{k-1}\textbf{V}\times_k \textbf V_k,
\]
where $\mathscr{S}_{\text{red}}\in\mathbb{R}^{(r\times r\times \stackrel{k-1}{\cdots} \times r)\times r_k}$, $\textbf{V}\in\mathbb{R}^{n\times r}$, and $\textbf{V}_k\in\mathbb{R}^{n\times r_k}$.  Let $\textbf{z}(t)=\textbf{V}^{\top}\mathbf{x}(t)\in\mathbb{R}^{r}$ denote the new latent state and substitute the dynamic tensor $\mathscr{A}$ with its compact HOSVD form. According to the properties of tensor vector multiplication, the new state dynamics is given by
\begin{align*}
\dot{\textbf{z}}(t) &= \textbf{V}^\top\dot{\textbf{x}}(t) = \textbf{V}^\top \big(\mathscr A\textbf x(t)^{k-1} + \textbf{B} \textbf u(t)\big)\\
& = (\mathscr{S}_{\text{red}}\times_k \textbf{V}^\top\textbf{V}_k)(\textbf{V}^\top\textbf{x}(t))^{k-1} + \textbf{V}^\top\textbf{B}\textbf{u}(t)\\
& = (\mathscr{S}_{\text{red}}\times_k \textbf{V}^\top\textbf{V}_k)\textbf{z}(t)^{k-1}+ \textbf{V}^\top\textbf{B}\textbf{u}(t),
\end{align*}
and the output $\textbf{y}(t) = \textbf{C}\textbf{V}\textbf{z}(t)$. Define $\mathscr{A}_{\text{red}}=\mathscr{S}_{\text{red}}\times_k \textbf{V}^\top\textbf{V}_k\in\mathbb{R}^{r\times r\times \stackrel{k}{\cdots} \times r}$, $\textbf{B}_{\text{red}}=\textbf{V}^\top\textbf{B}\in\mathbb{R}^{r\times m}$, and $\textbf{C}_{\text{red}}= \textbf{C}\textbf{V}\in\mathbb{R}^{l\times r}$. Then the reduced HPDS with linear input and output can be expressed as
\begin{align}\label{eq:redsyso}
\begin{cases}
\dot{\textbf z}(t)=\mathscr{A}_{\text{red}}\textbf z(t)^{k-1}+\textbf{B}_{\text{red}}\textbf u(t) \\
\textbf y(t)=\textbf{C}_{\text{red}}\textbf z(t)
\end{cases}.
\end{align}

The detailed steps of the model reduction procedure are presented in Algorithm~\ref{alg:MOR}. The proposed model reduction method leverages the low-rank structure inherent in the dynamic tensor through HOSVD, significantly reducing the number of system parameters from $n^k+nm+nl$ to $r^{k}+rm+rl$ while preserving the essential nonlinear dynamics of the original input-output HPDS \eqref{eq:syso}. In the following, we demonstrate that system-theoretic properties, including stability, controllability, and observability, are preserved in the reduced model.

\section{System-Theoretic Properties}\label{sec:pro}
We aim to rigorously examine whether fundamental system-theoretic properties, such as stability, controllability, and observability, are retained in the reduced  model \eqref{eq:redsyso}. These properties are essential for ensuring that the reduced model not only approximates the behavior of the original system but also remains suitable for analysis and control design.

\subsection{Stability}
Determining the stability of nonlinear HPDSs is generally challenging \cite{8706528}. Recent studies have shown that when the dynamic tensor $\mathscr{A}$ is symmetric and odeco, the stability of a tensor-based HPDS can be characterized by the Z-eigenvalues of the dynamic tensor \cite{chen2022explicit}. Therefore, we focus our stability analysis on tensor-based HPDSs with odeco dynamic tensors (referred to as odeco HPDSs). 

Consider an autonomous odeco HPDS 
\begin{align}\label{eq:sysso}
\dot{\textbf x}(t)=\mathscr A\textbf x(t)^{k-1}
\end{align}
with the reduced counterpart  obtained through the proposed HOSVD-based model reduction method
\begin{align}\label{eq:redsysso}
\dot{\textbf z}(t)=\mathscr A_{\text{red}}\textbf z(t)^{k-1}.
\end{align}
While the odeco HPDS \eqref{eq:sysso} may admit infinitely many equilibrium points, their dynamic behaviors are the same as the one at the origin. The equilibrium point $\textbf x_{\text{e}}=\textbf{0}$ is said to be stable if $\|\textbf{x}(t)\|\leq \gamma\|\textbf{x}\|_0$ for initial condition $\textbf{x}_0$ and $\gamma>0$, asymptotically stable if $\|\textbf{x}(t)\|\rightarrow 0$ as $t\rightarrow 0$, and unstable if $\|\textbf{x}(t)\|\rightarrow \infty$ as $t\rightarrow c$ for $c>0$. The stability properties of the odeco HPDS \eqref{eq:sysso} are summarized below. 

\begin{theorem}[Stability \cite{chen2022explicit}]\label{thm:stability}
    Suppose that the initial condition for the odeco HPDS \eqref{eq:sysso} is $\textbf{x}_0=\sum_{j=1}^n\alpha_j \textbf{u}_j$ where $\textbf{u}_j$ are the Z-eigenvectors of $\mathscr{A}$. The equilibrium point $\textbf x_{\text{e}}=\textbf{0}$ of the odeco HPDS \eqref{eq:sysso} is (i) stable if and only if $\lambda_j\alpha_j^{k-2}\leq 0$ for all $j=1,2,\dots, n$; (ii) asymptotically stable if and only if $\lambda_j\alpha_j^{k-2}< 0$ for all $j=1,2,\dots, n$; (iii) unstable if and only if $\lambda_j\alpha_j^{k-2}> 0$ for some $j=1,2,\dots, n$,
    where $\lambda_j$ are the corresponding Z-eigenvalues of $\textbf{u}_j$. 
\end{theorem}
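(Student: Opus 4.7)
The plan is to exploit the orthogonal decomposability of $\mathscr{A}$ to decouple the dynamics into scalar ODEs along the Z-eigenvector directions. Writing $\mathscr{A} = \sum_{j=1}^n \lambda_j \textbf{u}_j \circ \textbf{u}_j \circ \cdots \circ \textbf{u}_j$ and using the orthonormality of $\{\textbf{u}_j\}$, for any $\textbf{x} = \sum_j c_j \textbf{u}_j$ one gets $\mathscr{A}\textbf{x}^{k-1} = \sum_j \lambda_j c_j^{k-1} \textbf{u}_j$. The first step is to substitute the ansatz $\textbf{x}(t) = \sum_j c_j(t)\textbf{u}_j$ with $c_j(0) = \alpha_j$ into \eqref{eq:sysso}, matching coefficients in the orthonormal basis to obtain the decoupled system $\dot{c}_j(t) = \lambda_j c_j(t)^{k-1}$ for $j = 1,\dots,n$. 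Uniqueness of solutions to \eqref{eq:sysso} then guarantees that this modal ansatz is in fact the unique trajectory.

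The second step is to analyze each scalar ODE explicitly. For $k\geq 3$, separation of variables yields the closed-form
\[
c_j(t) = \alpha_j\bigl(1 - (k-2)\lambda_j \alpha_j^{k-2}\, t\bigr)^{-1/(k-2)},
\]
and therefore $c_j(t)^{k-2} = \alpha_j^{k-2}/\bigl(1 - (k-2)\lambda_j \alpha_j^{k-2} t\bigr)$. A straightforward case analysis on the sign of $\sigma_j := \lambda_j \alpha_j^{k-2}$ gives: if $\sigma_j < 0$, the denominator grows unboundedly and $c_j(t)\to 0$; if $\sigma_j = 0$ (either $\lambda_j = 0$ or $\alpha_j = 0$), then $c_j(t) \equiv \alpha_j$; and if $\sigma_j > 0$, the denominator vanishes at the finite time $t^* = 1/((k-2)\sigma_j)$, producing finite-time blow-up.

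The third step is to translate these modal behaviors into system-level stability via $\|\textbf{x}(t)\|_2^2 = \sum_j c_j(t)^2$, which follows from the orthonormality of the Z-eigenvectors. Boundedness of every $c_j(t)$ is then equivalent to $\sigma_j \leq 0$ for all $j$ (stability), decay of every $c_j(t)$ to zero is equivalent to $\sigma_j < 0$ for all $j$ (asymptotic stability), and finite-time escape of any $c_j(t)$ is equivalent to $\sigma_j > 0$ for some $j$ (instability). Each ``only if'' direction is obtained by choosing $\textbf{x}_0$ aligned with a single eigenvector $\textbf{u}_{j_0}$ so that the contribution of that mode cannot be cancelled.

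The main subtlety will be the boundary case $k = 2$, where the exponent $1/(k-2)$ is singular and the ODE is actually linear with solution $c_j(t) = \alpha_j e^{\lambda_j t}$; I would handle this separately and note that, under the natural convention $\alpha_j^{0} = 1$ for $\alpha_j \neq 0$, the sign condition $\lambda_j\alpha_j^{k-2}\lessgtr 0$ reduces to the classical linear spectral condition $\lambda_j \lessgtr 0$. A minor care is also required for modes with $\alpha_j = 0$: by uniqueness $c_j(t)\equiv 0$, consistent with $\sigma_j = 0$ contributing nothing to the norm, so these modes never obstruct any of the three claims.
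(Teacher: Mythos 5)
The paper states this theorem as a cited result from \cite{chen2022explicit} and gives no proof of its own; your argument—decoupling the dynamics along the orthonormal Z-eigenvectors into scalar ODEs $\dot{c}_j=\lambda_j c_j^{k-1}$, solving by separation of variables, and reading off boundedness, decay, or finite-time blow-up from the sign of $\lambda_j\alpha_j^{k-2}$—is precisely the standard derivation used in that reference, and your computations check out. The only cosmetic point is that the ``only if'' directions need no special choice of $\textbf{x}_0$ (the initial condition is fixed in the statement), since orthonormality already gives $\|\textbf{x}(t)\|^2=\sum_j c_j(t)^2$ so no modal contribution can be cancelled.
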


We first show that the reduced HPDS \eqref{eq:redsysso} preserves the odeco structure.

\begin{lemma}\label{lemma:1}
The reduced dynamic tensor $\mathscr{A}_{\text{red}}$ is odeco and retains the nonzero Z-eigenvalues of $\mathscr{A}$. Moreover, the reduced HPDS \eqref{eq:redsysso} has a unique equilibrium point at the origin. 
\end{lemma}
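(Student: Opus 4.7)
My plan is to lift the orthogonal decomposition of the original dynamic tensor through the HOSVD reduction. Since $\mathscr{A}$ is odeco, I start from $\mathscr{A}=\sum_{j=1}^n\lambda_j\,\textbf{u}_j\circ\cdots\circ\textbf{u}_j$ with orthonormal Z-eigenvectors $\textbf{u}_j$, assuming without loss of generality that the first $r$ Z-eigenvalues are the nonzero ones, ordered by decreasing magnitude. Combining the compact HOSVD $\mathscr{A}=\mathscr{S}_{\text{red}}\times_1\textbf{V}\times_2\cdots\times_{k-1}\textbf{V}\times_k\textbf{V}_k$ with the definition $\mathscr{A}_{\text{red}}=\mathscr{S}_{\text{red}}\times_k\textbf{V}^\top\textbf{V}_k$ and exploiting $\textbf{V}^\top\textbf{V}=\textbf{I}_r$, I first derive the concise identity
\[
\mathscr{A}_{\text{red}}=\mathscr{A}\times_1\textbf{V}^\top\times_2\textbf{V}^\top\times_3\cdots\times_k\textbf{V}^\top,
\]
which removes the asymmetric appearance of $\textbf{V}_k$ from all subsequent manipulations.

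The central step is to identify the column space of $\textbf{V}$ with $\mathrm{span}\{\textbf{u}_1,\ldots,\textbf{u}_r\}$. The orthogonal decomposition of $\mathscr{A}$ is itself an HOSVD with diagonal core $\mathrm{diag}(\lambda_1,\ldots,\lambda_n)$ and common factor matrix $\textbf{U}=[\textbf{u}_1\,\cdots\,\textbf{u}_n]$, so the $p$-mode higher-order singular values of $\mathscr{A}$ are exactly $|\lambda_1|\geq\cdots\geq|\lambda_n|$, and the compact truncation keeps the columns corresponding to nonzero singular values, giving $\mathrm{range}(\textbf{V})=\mathrm{span}\{\textbf{u}_1,\ldots,\textbf{u}_r\}$. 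Substituting the orthogonal decomposition of $\mathscr{A}$ into the identity above (using multilinearity of outer products under mode multiplication) then yields
\[
\mathscr{A}_{\text{red}}=\sum_{j=1}^r\lambda_j\,\textbf{w}_j\circ\cdots\circ\textbf{w}_j,\qquad\textbf{w}_j:=\textbf{V}^\top\textbf{u}_j,
\]
where the terms with $j>r$ vanish. Because $\textbf{u}_j\in\mathrm{range}(\textbf{V})$ for every $j\leq r$, the projector $\textbf{V}\textbf{V}^\top$ fixes each such $\textbf{u}_j$, so $\textbf{w}_i^\top\textbf{w}_j=\textbf{u}_i^\top\textbf{V}\textbf{V}^\top\textbf{u}_j=\textbf{u}_i^\top\textbf{u}_j=\delta_{ij}$. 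This is precisely an orthogonal decomposition of $\mathscr{A}_{\text{red}}$, so $\mathscr{A}_{\text{red}}$ is odeco with Z-eigenpairs $(\lambda_j,\textbf{w}_j)_{j=1}^{r}$, confirming that all nonzero Z-eigenvalues of $\mathscr{A}$ are inherited.

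The uniqueness of the equilibrium then follows immediately: any equilibrium $\textbf{z}_{\text{e}}$ of \eqref{eq:redsysso} satisfies $\mathscr{A}_{\text{red}}\textbf{z}_{\text{e}}^{k-1}=\sum_{j=1}^r\lambda_j(\textbf{w}_j^\top\textbf{z}_{\text{e}})^{k-1}\textbf{w}_j=\textbf{0}$, and since $\{\textbf{w}_j\}_{j=1}^r$ forms an orthonormal basis of $\mathbb{R}^r$ with every $\lambda_j$ nonzero, each coefficient $\textbf{w}_j^\top\textbf{z}_{\text{e}}$ must vanish, forcing $\textbf{z}_{\text{e}}=\textbf{0}$. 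The main obstacle I anticipate is the non-uniqueness of $\textbf{V}$ when the magnitudes $|\lambda_j|$ are repeated; however, any admissible choice of $\textbf{V}$ still has the correct column space, so the odeco structure and Z-spectrum of $\mathscr{A}_{\text{red}}$ are unaffected by this ambiguity (the $\textbf{w}_j$'s are simply rotated within the repeated-eigenvalue block).
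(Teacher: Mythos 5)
Your proof is correct and follows essentially the same route as the paper: both substitute the orthogonal decomposition of $\mathscr{A}$ into the mode-wise projection by $\textbf{V}^\top$ to obtain $\mathscr{A}_{\text{red}}=\sum_{j=1}^r\lambda_j\,(\textbf{V}^\top\textbf{u}_j)\circ\cdots\circ(\textbf{V}^\top\textbf{u}_j)$ and read off the odeco structure and the retained nonzero Z-eigenvalues. Your version is in fact slightly more careful at two points: the paper tacitly takes $\textbf{V}=[\textbf{u}_1\,\cdots\,\textbf{u}_r]$ so that $\textbf{V}^\top\textbf{u}_j=\textbf{e}_j$, whereas you only use $\mathrm{range}(\textbf{V})=\mathrm{span}\{\textbf{u}_1,\dots,\textbf{u}_r\}$ and verify orthonormality of the $\textbf{w}_j$ directly (handling the non-uniqueness of the HOSVD factors), and you prove uniqueness of the equilibrium from scratch where the paper cites Proposition 2 of an external reference.
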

\begin{proof}
  Since the dynamic tensor $\mathscr{A}$ is odeco, the core tensor $\mathscr{S}$ in its HOSVD is diagonal, with the Z-eigenvalues of $\mathscr{A}$ appearing along the diagonal entries. Therefore, the reduced core tensor $\mathscr{S}_{\text{red}}$ is a diagonal tensor containing nonzero Z-eigenvalues. Furthermore, because $\mathscr{A}$ is symmetric, the last factor matrix in its HOSVD satisfies $\textbf{V}_k = \textbf{V}$, which implies 
  \begin{align*}
\mathscr S_\text{red}&=\Big(\sum_{j=1}^n\lambda_j\textbf u_j\circ\textbf u_j\circ\dots\circ\textbf u_j\Big)\times_1\textbf V^\top\times_2\dots\times_k\textbf V^\top\\&=\sum_{j=1}^r\lambda_j(\textbf V^\top\textbf u_j\circ\textbf V^\top\textbf u_j\circ\dots\circ\textbf V^\top\textbf u_j)\\&=\sum_{j=1}^r\lambda_j\textbf e_j\circ \textbf e_j\circ\dots\circ\textbf e_j,
\end{align*}
where $\textbf{e}_j$ are the standard basis vectors. Therefore, $\mathscr{A}_{\text{red}} = \mathscr{S}_{\text{red}}$ is odeco, and $\mathscr{A}_{\text{red}}$ retains nonzero Z-eigenvalues in its orthogonal decomposition. Moreover, by Proposition 2 in \cite{chen2022explicit}, the reduced HPDS \eqref{eq:redsysso} then has a unique equilibrium point at the origin, as all Z-eigenvalues of $\mathscr{A}_{\text{red}}$ are nonzero.
\end{proof}

Building on the above lemma, we can  show the stability properties of the reduced HPDS \eqref{eq:redsysso}.

\begin{proposition}\label{pro:stability}
If the original odeco HPDS \eqref{eq:sysso} is stable or asymptotically stable at an equilibrium point, then the reduced HPDS \eqref{eq:redsysso} is asymptotically stable at the origin. Moreover, if the original system is unstable at an equilibrium point, the reduced system is also unstable at the origin. 
\end{proposition}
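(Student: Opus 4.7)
The plan is to lift Theorem~\ref{thm:stability} mode by mode to the reduced system, using Lemma~\ref{lemma:1} to pin down the Z-eigenstructure of $\mathscr{A}_{\text{red}}$ and tracking how the modal coefficients of the initial condition transform under the projection $\textbf{V}^\top$.

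First, I would index the orthogonal decomposition $\mathscr{A}=\sum_{j=1}^{n}\lambda_j\,\textbf{u}_j\circ\textbf{u}_j\circ\stackrel{k}{\cdots}\circ\textbf{u}_j$ so that the $r$ nonzero Z-eigenvalues $\lambda_1,\dots,\lambda_r$ correspond to the columns retained in $\textbf{V}$. Lemma~\ref{lemma:1} then gives $\mathscr{A}_{\text{red}}=\sum_{j=1}^{r}\lambda_j\,\textbf{e}_j\circ\textbf{e}_j\circ\stackrel{k}{\cdots}\circ\textbf{e}_j$, so the Z-eigenvectors of $\mathscr{A}_{\text{red}}$ are exactly the standard basis vectors of $\mathbb{R}^r$. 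Next, writing $\textbf{x}_0=\sum_{j=1}^{n}\alpha_j\textbf{u}_j$ and using $\textbf{V}^\top\textbf{u}_j=\textbf{e}_j$ for $j\le r$ and $\textbf{V}^\top\textbf{u}_j=\textbf{0}$ for $j>r$, I obtain $\textbf{z}_0=\textbf{V}^\top\textbf{x}_0=\sum_{j=1}^{r}\alpha_j\textbf{e}_j$. Hence the modal coefficients $\beta_j$ of $\textbf{z}_0$ in the Z-eigenbasis of $\mathscr{A}_{\text{red}}$ coincide with $\alpha_j$ for every $j\le r$.

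With this modal correspondence in place, I would invoke Theorem~\ref{thm:stability} on the reduced system. If the original system is stable (respectively asymptotically stable) at the origin, then $\lambda_j\alpha_j^{k-2}\le 0$ (respectively $<0$) for all $j=1,\dots,n$; restricting to $j\le r$ yields $\lambda_j\beta_j^{k-2}\le 0$ (respectively $<0$), which is the sign condition for stability (respectively asymptotic stability) of the reduced system. For the instability part, if $\lambda_{j^\star}\alpha_{j^\star}^{k-2}>0$ for some $j^\star$, then necessarily $\lambda_{j^\star}\ne 0$, forcing $j^\star\le r$, so the same strict inequality holds for the reduced system and certifies instability via Theorem~\ref{thm:stability}.

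The main obstacle I anticipate is justifying the upgrade from \emph{stable} in the hypothesis to \emph{asymptotically stable} in the conclusion: the saturation $\lambda_j\alpha_j^{k-2}=0$ can arise either from $\alpha_j=0$ or from $\lambda_j=0$, and the compact HOSVD truncates exactly the latter kind of mode. A clean argument is required to explain why discarding these dynamically trivial directions sharpens the surviving inequalities, or alternatively, to surface any implicit assumption on the initial condition (for instance $\alpha_j\ne 0$ whenever $\lambda_j\ne 0$) that renders the upgrade unconditional. The remaining pieces are then direct mode-by-mode applications of Theorem~\ref{thm:stability}.
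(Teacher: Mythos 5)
Your proposal follows the paper's proof essentially verbatim: compute $\textbf{z}_0=\textbf{V}^\top\textbf{x}_0=\sum_{j=1}^r\alpha_j\textbf{e}_j$, invoke Lemma~\ref{lemma:1} to conclude that $\mathscr{A}_{\text{red}}$ is odeco with the retained nonzero Z-eigenvalues and a unique equilibrium at the origin, and apply Theorem~\ref{thm:stability} to the reduced system. The obstacle you flag at the end is genuine but is not resolved in the paper either: its proof passes from $\lambda_j\alpha_j^{k-2}\le 0$ for $j\le n$ together with $\lambda_j\ne 0$ for $j\le r$ directly to $\lambda_j\alpha_j^{k-2}<0$ for $j\le r$, which tacitly assumes $\alpha_j\ne 0$ for every retained mode --- the truncation only eliminates the $\lambda_j=0$ source of saturation, exactly as you observe. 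So you have not missed an idea; the implicit assumption on the initial condition that you suspect is precisely what the paper's argument relies on without stating it.
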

\begin{proof}
Suppose that the initial condition of the odeco HPDS \eqref{eq:sysso} is given by $\textbf{x}_0=\sum_{j=1}^n\alpha_j \textbf{u}_j$ and the transformation $\textbf{z}(t) = \textbf{V}^\top\textbf{x}(t)$. The initial condition for the reduced HPDS \eqref{eq:redsysso} can be computed as
    \[
    \textbf{z}_0 = \textbf{V}^\top\textbf{x}_0=\textbf{V}^\top\textbf{U}\begin{bmatrix}
        \alpha_1\\
        \alpha_2\\
        \vdots\\
        \alpha_n
    \end{bmatrix} = \begin{bmatrix}
         \alpha_1\\
        \alpha_2\\
        \vdots\\
        \alpha_r
    \end{bmatrix}  = \sum_{j=1}^r \alpha_j\textbf{e}_j,
    \]
where $\textbf{U}\in\mathbb{R}^{n\times n}$  contains the Z-eigenvectors $\textbf{u}_j$ as its columns. If the original odeco HPDS \eqref{eq:sysso} is stable or asymptotically stable at an equilibrium point, we have $\lambda_j\alpha_j^{k-2}\leq 0$ for $j=1,2,\dots,n$ by Theorem~\ref{thm:stability}. Applying Lemma \ref{lemma:1} yields that $\lambda_j\alpha_j^{k-2}< 0$ for $j=1,2,\dots,r$ and $\textbf{x}_{\text{e}}$ is the unique equilibrium point. Therefore, we conclude that the reduced HPDS \eqref{eq:redsysso} is asymptotically stable at the origin. The unstable case follows similarly.
\end{proof}

Focusing on the class of odeco HPDSs, whose stability can be characterized by the Z-eigenvalues of their dynamic tensors, we demonstrate that stability properties are preserved under the proposed HOSVD-based model reduction method.

\subsection{Controllability}
The controllability of input HPDSs has been widely studied, particularly yielding strong results in the case of linear control input \cite{jurdjevic1985polynomial}. Consider a general tensor-based HPDS with linear input (i.e., the dynamic tensor $\mathscr{A}$ is almost symmetric) 
\begin{align}\label{eq:sysco}
\dot{\textbf x}(t)=\mathscr A\textbf x(t)^{k-1}+\textbf B\textbf u(t)
\end{align}
with the reduced counterpart derived from the proposed
HOSVD-based model reduction method
\begin{align}\label{eq:redsysco}
\dot{\textbf z}(t)=\mathscr A_{\text{red}}\textbf z(t)^{k-1}+\textbf B_{\text{red}}\textbf u(t).
\end{align}

A dynamical system is said to be strongly controllable if, for any pair of initial and target states, there exists a suitable control input that drives the system from the initial state to the target state within an arbitrarily chosen time interval \cite{jurdjevic1985polynomial}. For the tensor-based HPDS of odd degree (i.e., $k$ is even) with linear input \eqref{eq:sysco}, it has been shown that strong controllability can be verified using a tensor-based rank criterion. This criterion extends the classical Kalman's rank condition from linear systems to the setting of HPDSs \cite{mao2025tensor,chen2021controllability}.

\begin{theorem}[Controllability \cite{mao2025tensor}]\label{thm:control}
    The tensor-based HPDS of with linear input \eqref{eq:sysco} for even $k$ is strongly controllable if and only if the controllability matrix defined as
    \begin{equation}\label{eq:controllability}
        \textbf R=\begin{bmatrix}\textbf M_0 & \textbf M_1 &\cdots& \textbf M_{n-1}\end{bmatrix},
    \end{equation}
where $\textbf M_0=\textbf B$ and 
each $\textbf M_j$ is formed from 
\begin{align*}
    \{&\mathscr A\times_1\textbf  v_1\times_2\textbf{v}_2\times_3\cdots\times_{k-1}\textbf v_{k-1}\text{ }|\text{ }\textbf v_{p}\in\mathrm{col}\\&\left(\begin{bmatrix}\textbf M_0 & \textbf M_1 &\cdots& \textbf M_{j-1}\end{bmatrix}\right) \text{ for } p=1,2,\dots,k-1\}
\end{align*}
for $j=1,2,\dots,n-1$, has full rank $n$.
\end{theorem}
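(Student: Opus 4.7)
The plan is to relate the algebraic rank condition on $\textbf{R}$ to the Lie algebra rank condition (LARC) from geometric control theory and then invoke a reversibility argument to lift accessibility to strong controllability, which is precisely where the parity hypothesis on $k$ becomes essential. First I would compute the iterated Lie brackets of the drift $f(\textbf{x}) = \mathscr{A}\textbf{x}^{k-1}$ with the constant control fields $b_1, \ldots, b_m$ (the columns of $\textbf{B}$). Since each $b_i$ is constant, $[b_i, f] = (Df)\,b_i$, and a direct computation exploiting the almost-symmetry of $\mathscr{A}$ gives
\[
[b_i, f](\textbf{x}) = (k-1)\,\mathscr{A} \times_1 \textbf{x} \times_2 \cdots \times_{k-2} \textbf{x} \times_{k-1} b_i,
\]
a homogeneous polynomial vector field of degree $k-2$. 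Nesting $k-1$ constant-field brackets collapses all $\textbf{x}$-dependence and produces, up to nonzero combinatorial constants, vectors of the form $\mathscr{A} \times_1 v_1 \times_2 \cdots \times_{k-1} v_{k-1}$ with each $v_p \in \mathrm{col}(\textbf{B})$, which is exactly a spanning set for the column space of $\textbf{M}_1$.

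Next I would set up an induction to identify $\mathrm{col}(\textbf{R})$ with the value at the origin of the Lie algebra $\mathcal{L}(f, b_1, \ldots, b_m)$. At stage $j$, mixing constant fields and the drift with polynomial vector fields whose values at the origin lie in $\mathrm{col}([\textbf{M}_0\; \cdots\; \textbf{M}_{j-1}])$ reproduces exactly the recursive rule defining $\textbf{M}_j$, since each nested bracket either contracts one more mode of $\mathscr{A}$ with a constant $b_i$ or replaces an $\textbf{x}$ slot by a previously generated direction. By a Cayley-Hamilton-style dimension argument the chain stabilizes after at most $n$ stages. Consequently, $\mathrm{rank}(\textbf{R}) = n$ is equivalent to LARC holding at the origin, and the Chow-Rashevsky theorem then yields accessibility.

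To promote accessibility to strong controllability, I would exploit that for even $k$ the drift is odd: $f(-\textbf{x}) = (-1)^{k-1} f(\textbf{x}) = -f(\textbf{x})$. Under the involution $\textbf{x} \mapsto -\textbf{x}$, the controlled vector field transforms into one of the same form, so the time-reversed dynamics is again a polynomial system with odd drift on $\mathbb{R}^n$; combined with LARC at the origin, Jurdjevic's reversibility argument \cite{jurdjevic1985polynomial} promotes the open interior of the reachable set into all of $\mathbb{R}^n$ over any positive time horizon, yielding strong controllability. Conversely, if $\mathrm{rank}(\textbf{R}) < n$, then $\mathrm{col}(\textbf{R})$ is a proper subspace that is invariant under $\mathrm{col}(\textbf{B})$-translations and under every admissible tensor contraction; every trajectory starting from a point in this subspace stays trapped there for any control $\textbf{u}$, precluding strong controllability.

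The main obstacle I foresee is the second step: tracking deeply nested Lie brackets of mixed polynomial and constant fields and showing that at the origin they contribute no directions beyond those generated by the recursive construction of the $\textbf{M}_j$. The bracket algebra is combinatorially dense, and the Jacobi identity together with the almost-symmetry of $\mathscr{A}$ must be used carefully to reduce a general iterated bracket to the canonical tensor-vector-product form. The reversibility upgrade, although conceptually short, relies crucially on the parity hypothesis; for odd $k$ the drift is even and the associated symmetry obstructions can prevent accessibility from promoting to strong controllability.
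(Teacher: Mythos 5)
First, note that the paper does not prove this statement: Theorem~\ref{thm:control} is imported verbatim from \cite{mao2025tensor} (see also \cite{chen2021controllability}), so there is no in-paper proof to compare against. Your sketch follows what is almost certainly the intended route in those references, namely Jurdjevic--Kupka's theory of polynomial control systems \cite{jurdjevic1985polynomial}: iterated brackets of the constant control fields with the homogeneous drift collapse the state dependence and produce exactly the tensor contractions $\mathscr A\times_1\textbf v_1\times_2\cdots\times_{k-1}\textbf v_{k-1}$, the chain of subspaces $\mathrm{col}([\textbf M_0\;\cdots\;\textbf M_j])$ stabilizes in at most $n$ steps, and the parity of $k$ is what upgrades accessibility to strong controllability. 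The first step of your computation, $[b_i,f](\textbf x)=(k-1)\,\mathscr A\times_1\textbf x\times_2\cdots\times_{k-2}\textbf x\times_{k-1}b_i$ using almost symmetry, is correct and is the right entry point.

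That said, two links in your chain are genuinely incomplete. (i) Chow--Rashevsky applies to driftless (symmetric) families; for a system with drift, LARC gives accessibility only via Krener's theorem or the Sussmann--Jurdjevic orbit theorem, and accessibility is strictly weaker than strong controllability. The actual mechanism by which even $k$ (odd degree $k-1$) closes this gap is not a time-reversal involution --- note that conjugating the time-reversed flow by $\textbf x\mapsto-\textbf x$ still returns the drift $-f$, so your ``reversibility'' step as written does not go through. The correct argument, due to Jurdjevic--Kupka, works with the Lie saturate: unbounded controls place $\pm\textbf b_i$ in the saturate, high-gain limits adjoin the directions $\mathscr A\times_1\textbf v_1\cdots\times_{k-1}\textbf v_{k-1}$, and odd homogeneity gives $\mathscr A(-\textbf v)^{k-1}=-\mathscr A\textbf v^{k-1}$, so the generated set of directions is symmetric and hence a subspace rather than merely a cone; full rank of that subspace yields strong controllability. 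For odd $k$ the sign does not flip, one only obtains a cone, and only accessibility survives --- exactly as the paper remarks after Proposition~\ref{pro:control}. (ii) The stage-$j$ identification, where the $\textbf v_p$ range over $\mathrm{col}([\textbf M_0\;\cdots\;\textbf M_{j-1}])$ and thus correspond to values of non-constant bracket fields, is the combinatorial heart of the theorem; you name it as the main obstacle but do not resolve it, so as it stands the proposal is an outline of the right strategy rather than a proof.
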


The notation $\text{col}(\cdot)$ denotes the column space of a matrix. The controllability of the reduced HPDS with linear input \eqref{eq:redsysco} can be stated as follows.

\begin{proposition}\label{pro:control}
    If the tensor-based HPDS of with linear input \eqref{eq:sysco} for even $k$ is strongly controllable, then the reduced HPDS with linear input \eqref{eq:redsysco} is also strongly controllable.
\end{proposition}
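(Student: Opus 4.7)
The plan is to show that the reduced controllability matrix $\textbf{R}_{\text{red}}$ is related to the original $\textbf{R}$ by left multiplication by $\textbf{V}^\top$; since $\textbf{V}$ has orthonormal columns and $\textbf{R}$ has full rank $n$, this will transfer the rank property from $\mathbb{R}^n$ to $\mathbb{R}^r$ and then Theorem~\ref{thm:control} applied to the reduced system finishes the argument.

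First, I would establish a commuting identity: for any $\textbf{v}_1, \ldots, \textbf{v}_{k-1} \in \mathbb{R}^n$,
\begin{equation*}
\mathscr{A}_{\text{red}} \times_1 (\textbf{V}^\top \textbf{v}_1) \times_2 \cdots \times_{k-1} (\textbf{V}^\top \textbf{v}_{k-1}) = \textbf{V}^\top\big(\mathscr{A} \times_1 \textbf{v}_1 \times_2 \cdots \times_{k-1} \textbf{v}_{k-1}\big).
\end{equation*}
This follows by substituting the compact HOSVD $\mathscr{A} = \mathscr{S}_{\text{red}} \times_1 \textbf{V} \times_2 \cdots \times_{k-1} \textbf{V} \times_k \textbf{V}_k$, repeatedly applying the mode-product rule $(\mathscr{T} \times_p \textbf{M}) \times_p \textbf{v} = \mathscr{T} \times_p (\textbf{M}^\top \textbf{v})$, and recognizing $\mathscr{S}_{\text{red}} \times_k (\textbf{V}^\top \textbf{V}_k) = \mathscr{A}_{\text{red}}$.

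Second, let $\mathcal{C}_j = \mathrm{col}([\textbf{M}_0 \cdots \textbf{M}_j])$ and $\mathcal{C}_j^{\text{red}}$ its reduced counterpart. I would prove by induction on $j$ that $\mathcal{C}_j^{\text{red}} = \textbf{V}^\top \mathcal{C}_j$. The base case is $\mathcal{C}_0^{\text{red}} = \mathrm{col}(\textbf{V}^\top \textbf{B}) = \textbf{V}^\top \mathcal{C}_0$. For the inductive step, every generator of $\textbf{M}_j^{\text{red}}$ uses vectors $\textbf{w}_p \in \mathcal{C}_{j-1}^{\text{red}} = \textbf{V}^\top \mathcal{C}_{j-1}$, so $\textbf{w}_p = \textbf{V}^\top \textbf{v}_p$ for some $\textbf{v}_p \in \mathcal{C}_{j-1}$, and the commuting identity rewrites the resulting contraction as $\textbf{V}^\top$ times the corresponding generator of $\textbf{M}_j$.

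Third, strong controllability of the original system gives $\mathcal{C}_{n-1} = \mathbb{R}^n$, hence $\mathcal{C}_{n-1}^{\text{red}} = \textbf{V}^\top \mathbb{R}^n = \mathbb{R}^r$ since $\textbf{V}$ has orthonormal columns. To bridge to the $r-1$ step controllability matrix required for an $r$-dimensional system, I would invoke a standard stabilization argument: the chain $\{\mathcal{C}_j^{\text{red}}\}$ is a non-decreasing family of subspaces of $\mathbb{R}^r$, and once two consecutive members coincide, the chain is constant thereafter (since the generators of $\textbf{M}_{j+1}^{\text{red}}$ depend only on $\mathcal{C}_j^{\text{red}}$). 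Consequently, the chain must stabilize by index $r-1$, forcing $\mathcal{C}_{r-1}^{\text{red}} = \mathcal{C}_{n-1}^{\text{red}} = \mathbb{R}^r$. A brief verification that $\mathscr{A}_{\text{red}}$ inherits the almost-symmetric structure of $\mathscr{A}$ (because the common factor $\textbf{V}$ appears in the first $k-1$ HOSVD modes) then allows applying Theorem~\ref{thm:control} to conclude strong controllability of the reduced system. The main obstacle is deriving the commuting identity cleanly, as it requires chaining the tensor-vector product rule across all $k-1$ contractions while tracking how the HOSVD factors $\textbf{V}$ absorb into $\textbf{V}^\top \textbf{v}_p$ on modes $1,\ldots,k-1$ and how the external $\textbf{V}^\top$ combines with $\textbf{V}_k$ to reconstitute the mode-$k$ factor of $\mathscr{A}_{\text{red}}$; the induction and rank-transfer arguments are otherwise routine.
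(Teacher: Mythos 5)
Your proposal is correct and follows essentially the same route as the paper's proof: both establish that each reduced controllability block equals $\textbf{V}^\top$ times the corresponding original block (the paper verifies this explicitly only for $\textbf N_1$ with $m=1$ and extends ``by the same reasoning,'' which is precisely your general commuting identity), and then transfer full rank using the orthonormality of the columns of $\textbf V$. Your additional stabilization argument for the chain $\{\mathcal{C}_j^{\text{red}}\}$ is a careful touch the paper omits: since the original controllability matrix has $n$ block columns while the reduced one has only $r$, the paper's identity $\textbf R_{\text{red}}=\textbf V^\top\textbf R$ needs exactly this observation to be dimensionally coherent.
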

\begin{proof}
    To prove that reduced HPDS with linear input (\ref{eq:redsysco}) for even $k$ is strongly controllable, it is equivalent to prove that the reduced controllability matrix  
\begin{align*}
\textbf R_{\text{red}}=\begin{bmatrix}\textbf N_0 & \textbf N_1 &\cdots& \textbf N_{r-1}\end{bmatrix},
\end{align*}
where $\textbf N_0=\textbf B_{\text{red}}$ and 
each $\textbf N_j$ is formed from 
\begin{align*}
    \{&\mathscr A_{\text{red}}\times_1\textbf  v_1\times_2\textbf{v}_2\times_3\cdots\times_{k-1}\textbf v_{k-1}\text{ }|\text{ }\textbf v_{p}\in\mathrm{col}\\&\left(\begin{bmatrix}\textbf N_0 & \textbf N_1 &\cdots& \textbf N_{j-1}\end{bmatrix}\right) \text{ for } p=1,2,\dots,k-1\}
\end{align*}
for $j=1,2,\dots,n-1$, has full rank $r$. Without loss of generality, we assume that $m=1$ so that the control matrix reduces to a vector $\textbf{b}$. It is evident that $\textbf N_0=\textbf V^\top\textbf M_0$. With the compact HOSVD format of $\mathscr{A}$, we can compute $\textbf{M}_1$  as
\begin{align*}
\mathscr A\textbf b^{k-1}&=\mathscr S_{\text{red}}\times_1\textbf b^\top\textbf V\times_2\textbf b^\top\textbf V\times_{3}\cdots\times_{k-1}\textbf b^\top\mathbf{V}\times_{k}\mathbf{V}_{k}.
\end{align*}
Moreover, since $\textbf b_{\text{red}}=\textbf V^\top\textbf b$, we can compute $\textbf N_1$ as
\begin{align*}
\mathscr A_{\text{red}}\textbf b_{\text{red}}^{k-1}&=\mathscr S_{\text{red}}\times_1\textbf b^\top\textbf V\times_2\cdots\times_{k-1}\textbf b^\top\mathbf{V}\times_{k}\mathbf{V}^\top\mathbf{V}_{k}\\&=\mathbf{V}^\top(\mathscr S_\text{red}\times_1\textbf b^\top\textbf V\times_2\cdots\times_{k-1}\textbf b^\top\mathbf{V}\times_k \textbf{V}_k).
\end{align*}
Therefore, it holds that $\textbf N_1=\textbf V^\top\textbf M_1$. By extending the same reasoning used to derive $\textbf N_1$, it can be shown that $\textbf N_j=\textbf V^\top\textbf M_j$ for $j=0,1,\dots,n-1$. Hence, the reduced controllability matrix satisfies $\textbf{R}_{\text{red}} = \textbf{V}^\top\textbf{R}$. If the tensor-based HPDS with linear input \eqref{eq:sysco} for even $k$ is strongly controllable, then the controllability matrix $\textbf{R}$ has full rank $n$ according to Theorem \ref{thm:control}. Since the factor matrix \textbf{V} contains orthonormal columns, it follows that the reduced controllability matrix $\textbf{R}_{\text{red}}$ has full rank $r$, and thus the reduced HPDS with linear input \eqref{eq:redsysco} is also strongly controllable.
\end{proof}

This result highlights the effectiveness of the proposed HOSVD-based model reduction in preserving strong controllability for HPDSs with linear input, making it amenable to scalable control design. Additionally, when $k$ is odd, corresponding to HPDSs of even degree, only accessibility, a weaker form of controllability, can be guaranteed for the tensor-based system \eqref{eq:sysco} under the same condition stated in Theorem \ref{thm:control} \cite{chen2021controllability}. By similar arguments, it follows that accessibility is likewise preserved for the reduced HPDS with linear input \eqref{eq:redsysco} under the proposed HOSVD-based model reduction method.

\subsection{Observability}
Since establishing strong observability results for output HPDSs is generally challenging, we focus our analysis on local weak observability defined in nonlinear systems theory. Consider a general tensor-based HPDS with linear output 
\begin{align}\label{eq:sysoo}
		\begin{cases}
			\dot{\textbf x}(t)=\mathscr A\textbf x(t)^{k-1}  \\
			\textbf y(t)=\textbf C\textbf x(t)\end{cases}
\end{align}
with the reduced counterpart obtained from the proposed HOSVD-based model reduction
\begin{align}\label{eq:redsysoo}
\begin{cases}
\dot{\mathbf{z}}(t)=\mathscr{A}_{\text{red}}\mathbf{z}(t)^{k-1}\\
\textbf{y}(t)=\textbf{C}_{\text{red}}\textbf{z}(t)
\end{cases}.
\end{align}
For simplicity, we ignore the control matrix $\textbf{B}$ and its reduced form $\textbf{B}_{\text{red}}$. In addition, we assume that the tensor-based HPDS with linear output \eqref{eq:sysoo} is complete, i.e., for every $\textbf x_0\in M\subset \mathbb{R}^n$, there exists a solution $\textbf x(t)\in  M$ satisfying $\textbf x(0)=\textbf x_0$ and $\textbf x(t)\in M$ for all $t\in\mathbb R$. 

The concept of local weak observability can be characterized through distinguishability \cite{hermann1977nonlinear}. Specifically, given a subset $U\subseteq M\subset \mathbb{R}^n$, two initial states $\textbf{x}_0(0)$ and $\textbf{x}_1(0)$ within $U$ are said to be $U$-distinguishable if their respective trajectories remain in $U$ over a finite time interval and yield different outputs, i.e., $\textbf{C}\textbf{x}_0(t) \neq \textbf{C}\textbf{x}_1(t)$ for some $t$ in this interval. A system is called locally weakly observable at a point $\textbf x_0\in M$ if there exists an open neighborhood $U$ of $\textbf x_0$ such that for every open set $V \subseteq U$ containing $\textbf x_0$, the only point in $V$ that is not $V$-distinguishable from $\textbf x_0$ is $\textbf x_0$ itself. A tensor-based rank condition is also provided to determine the local weak observability of the tensor-based HPDS with linear output \eqref{eq:sysoo} \cite{mao2025tensor}.

\begin{theorem}[Observability \cite{mao2025tensor}]\label{thm:obs}
    The tensor-based HPDS with linear output \eqref{eq:sysoo} is locally weakly observable at $\textbf x$  if and only if the state-dependent observability matrix defined as
\begin{align}
		\textbf O(\textbf x)=\begin{bmatrix}
			\textbf P_0(\textbf{x})\\
			\textbf P_1(\textbf{x})\\ \vdots \\
			\textbf P_{n-1}(\textbf{x})
		\end{bmatrix},
	\end{align}
where $\textbf P_0=\textbf C$ and
\begin{align*}
\textbf P_{1}&=\textbf C\textbf A_{(k)}\sum_{q=1}^{k-1}\textbf x^{[q-1]}\otimes \textbf I_n\otimes\textbf x^{[k-1-q]},\\
\textbf P_{j}&=\textbf C\textbf A_{(k)}\textbf F_2\cdots\textbf F_{j}\sum_{q=1}^{jk-2j+1}\textbf x^{[q-1]}\otimes \textbf I_n\otimes\textbf x^{[jk-2j+1-q]},\\
\textbf F_j&=\sum_{i=1}^{(j-1)k-2j+3}\textbf I_n^{[i-1]}\otimes \textbf A_{(k)}\otimes\textbf I_n^{[(j-1)k-2j+3-i]},
\end{align*} for $j=2,3,\dots,n-1$, has full rank $n$.
\end{theorem}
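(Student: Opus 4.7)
The plan is to establish Theorem 3 by combining the classical Hermann--Krener characterization of local weak observability with explicit tensor calculus for iterated Lie derivatives along the HPDS vector field. First, I would invoke the Hermann--Krener framework from \cite{hermann1977nonlinear}, which states that the system \eqref{eq:sysoo} is locally weakly observable at $\textbf{x}$ if and only if the observation codistribution, spanned by the exterior derivatives of $h(\textbf{x})=\textbf{C}\textbf{x}$ and its iterated Lie derivatives along $f(\textbf{x})=\mathscr{A}\textbf{x}^{k-1}$, has rank $n$ at $\textbf{x}$. Because $f$ is polynomial (hence analytic), the space is finite-dimensional and a standard argument bounds the required number of iterates by $n-1$, so the relevant differentials are $\nabla h,\nabla(L_f h),\dots,\nabla(L_f^{n-1} h)$, which we shall show coincide with $\textbf{P}_0,\textbf{P}_1(\textbf{x}),\dots,\textbf{P}_{n-1}(\textbf{x})$.

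Second, I would rewrite the drift using the mode-$k$ unfolding, $f(\textbf{x})=\textbf{A}_{(k)}\textbf{x}^{[k-1]}$, where $\textbf{x}^{[m]}$ denotes the $m$-fold Kronecker power of $\textbf{x}$. The base case is $\textbf{P}_0=\nabla h=\textbf{C}$. For the first Lie derivative, $L_f h=\textbf{C}\textbf{A}_{(k)}\textbf{x}^{[k-1]}$, and the Kronecker-product derivative identity
\[
\frac{\partial}{\partial \textbf{x}}\,\textbf{x}^{[m]}=\sum_{q=1}^{m}\textbf{x}^{[q-1]}\otimes \textbf{I}_n\otimes \textbf{x}^{[m-q]}
\]
yields $\nabla(L_f h)=\textbf{P}_1(\textbf{x})$ in exactly the stated form.

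Third, I would establish the general $\textbf{P}_j(\textbf{x})$ by induction. The key computation is
\[
L_f^{j} h(\textbf{x})=\textbf{P}_{j-1}(\textbf{x})\,\textbf{A}_{(k)}\textbf{x}^{[k-1]}.
\]
Substituting $\textbf{A}_{(k)}\textbf{x}^{[k-1]}$ into the slot occupied by $\textbf{I}_n$ in $\textbf{P}_{j-1}$ produces, via the mixed-product identity $(\textbf{A}\otimes \textbf{B})(\textbf{C}\otimes \textbf{D})=\textbf{A}\textbf{C}\otimes \textbf{B}\textbf{D}$, a linear combination of monomials of total degree $jk-2j+1$ in $\textbf{x}$. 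Differentiating each monomial again with the Kronecker product rule produces the new tail sum of length $jk-2j+1$, while the insertion of $\textbf{A}_{(k)}$ into each available slot of the Kronecker product that already carries $\textbf{P}_{j-1}$ yields the new factor $\textbf{F}_j$, whose summation index ranges over the $(j-1)k-2j+3$ admissible positions. This produces $\textbf{P}_j(\textbf{x})$ exactly as stated.

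Finally, stacking $\textbf{P}_0,\textbf{P}_1(\textbf{x}),\dots,\textbf{P}_{n-1}(\textbf{x})$ gives $\textbf{O}(\textbf{x})$, so the Hermann--Krener condition translates directly into the full-rank criterion. The \emph{main obstacle} will be the bookkeeping in the induction step: correctly counting positions in the Kronecker products so that the summation ranges for $\textbf{F}_j$ and for the trailing sum match after composition with $\textbf{A}_{(k)}$ and a further differentiation. This combinatorial tracking, while mechanical, must be handled carefully because each application of $L_f$ simultaneously lengthens the Kronecker chain by $k-2$ and shifts the index set of the differentiation sum; no deeper nonlinear-systems machinery beyond Hermann--Krener is needed.
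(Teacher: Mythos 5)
This theorem is imported from \cite{mao2025tensor} and the paper you were given contains no proof of it, so there is no in-paper argument to compare against; I can only assess your proposal on its own terms. Your route --- Hermann--Krener observability rank condition plus explicit computation of the iterated Lie derivatives of $h(\textbf{x})=\textbf{C}\textbf{x}$ along $f(\textbf{x})=\textbf{A}_{(k)}\textbf{x}^{[k-1]}$ using the Kronecker product rule --- is almost certainly the intended one, and your bookkeeping checks out: each application of $L_f$ raises the polynomial degree by $k-2$, so $\deg L_f^j h = jk-2j+1$, which matches the range of the trailing differentiation sum in $\textbf{P}_j$, and the range $(j-1)k-2j+3$ in $\textbf{F}_j$ is exactly $\deg L_f^{j-1}h$, consistent with inserting $\textbf{A}_{(k)}$ into each slot of the previous Kronecker chain. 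The identification $\textbf{P}_1=\nabla(L_f h)$ is correct as written.

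There are, however, two genuine gaps you should not wave through. First, Hermann--Krener does not give a pointwise ``if and only if'': the observability rank condition at $\textbf{x}$ is \emph{sufficient} for local weak observability at $\textbf{x}$, but the converse holds only on an open dense set (generically), even for analytic systems. Asserting the stated equivalence at an arbitrary fixed $\textbf{x}$ requires either an additional argument exploiting the polynomial/linear-output structure or an acknowledgment that the necessity direction is generic; as written, your first step claims more than the cited framework delivers. Second, your bound of $n-1$ Lie derivatives is justified by a ``standard argument'' that is genuinely standard only for the codistribution filtration $\Omega_{j+1}=\Omega_j+L_f\Omega_j$ viewed over a neighborhood where ranks are locally constant: the stabilization property (once the rank stops growing it never grows again) is a statement about the invariant codistribution, not about the pointwise rank sequence of the stacked matrices $\textbf{P}_0,\dots,\textbf{P}_j$ at a single $\textbf{x}$. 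You need to either work at a point of local rank constancy or supply the argument that for this polynomial class the truncation at $j=n-1$ loses nothing. Neither gap invalidates the computational core of your proposal, but both must be closed for the ``if and only if'' as stated.
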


The superscript ``$[\cdot]$'' denotes the Kronecker product power. We find that if the dynamic tensor $\mathscr{A}$ is symmetric, the local weak observability of the reduced HPDS with linear output \eqref{eq:redsysoo} can be preserved.

\begin{proposition}
    Suppose that the dynamic tensor $\mathscr{A}$ is symmetric. If the tensor-based HPDS with linear output \eqref{eq:sysoo} is locally weakly observable at a point $\textbf{x}$, then the reduced HPDS with linear output \eqref{eq:redsysoo} is also locally weakly observable at the point $\textbf{z} = \textbf{V}^\top \textbf{x}$.
\end{proposition}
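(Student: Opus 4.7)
The plan is to mirror the strategy used for controllability in Proposition~\ref{pro:control}: show that the reduced observability matrix factors as $\textbf{O}_{\text{red}}(\textbf{z}) = \textbf{O}(\textbf{x})\textbf{V}$, then invoke the orthonormality of $\textbf{V}$ to transfer the rank condition.

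First I would exploit symmetry. Because $\mathscr{A}$ is symmetric, the compact HOSVD along the first $k-1$ modes satisfies $\textbf{V}_k=\textbf{V}$, so $\mathscr{A}=\mathscr{S}_{\text{red}}\times_1\textbf{V}\times_2\textbf{V}\times_3\cdots\times_k\textbf{V}$ and $\mathscr{A}_{\text{red}}=\mathscr{S}_{\text{red}}$. Unfolding along the $k$th mode yields the identity
\[
\textbf{A}_{(k)}=\textbf{V}\,\textbf{A}_{\text{red},(k)}\,\bigl(\textbf{V}^\top\bigr)^{[k-1]},
\]
which is the clean algebraic link between the original and reduced dynamic tensors. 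Also $\textbf{C}=\textbf{C}_{\text{red}}\textbf{V}^\top\cdot(\text{something})$ is not needed directly; instead use $\textbf{C}_{\text{red}}=\textbf{C}\textbf{V}$ so that $\textbf{P}_0(\textbf{x})\textbf{V}=\textbf{C}\textbf{V}=\textbf{C}_{\text{red}}=\textbf{P}_0^{\text{red}}(\textbf{z})$, which gives the base case.

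Next I would prove inductively that $\textbf{P}_j^{\text{red}}(\textbf{z})=\textbf{P}_j(\textbf{x})\textbf{V}$ for $j=0,1,\dots,n-1$, where $\textbf{z}=\textbf{V}^\top\textbf{x}$. The key algebraic fact is the Kronecker-product manipulation
\[
\bigl(\textbf{V}^\top\bigr)^{[q-1]}\otimes\textbf{V}^\top\otimes\bigl(\textbf{V}^\top\bigr)^{[k-1-q]}\bigl(\textbf{x}^{[q-1]}\otimes\textbf{I}_n\otimes\textbf{x}^{[k-1-q]}\bigr)=\bigl(\textbf{z}^{[q-1]}\otimes\textbf{I}_r\otimes\textbf{z}^{[k-1-q]}\bigr)\textbf{V}^\top,
\]
obtained from the mixed-product rule $(A\otimes B)(C\otimes D)=AC\otimes BD$ together with $(\textbf{V}^\top\textbf{x})^{[p]}=(\textbf{V}^\top)^{[p]}\textbf{x}^{[p]}$ and $\textbf{I}_r\textbf{V}^\top=\textbf{V}^\top\textbf{I}_n$. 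Plugging the unfolding identity into the formula for $\textbf{P}_1(\textbf{x})$ and applying this rule summand by summand gives $\textbf{P}_1(\textbf{x})\textbf{V}=\textbf{P}_1^{\text{red}}(\textbf{z})$ after using $\textbf{V}^\top\textbf{V}=\textbf{I}_r$. For the higher-order terms $\textbf{P}_j$ with $j\ge 2$, the same idea propagates through each $\textbf{F}_j$: each occurrence of $\textbf{A}_{(k)}$ inside $\textbf{F}_j$ can be replaced by $\textbf{V}\textbf{A}_{\text{red},(k)}(\textbf{V}^\top)^{[k-1]}$ and the flanking $\textbf{I}_n^{[\cdot]}$ blocks can be rewritten using $\textbf{I}_n=\textbf{V}\textbf{V}^\top$ on the appropriate projected subspace (which is valid once we have already passed through $\textbf{V}^\top$ on the left from the previous step in the recursion). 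Carrying out the bookkeeping, each layer contributes an extra $\textbf{V}\textbf{V}^\top$ that collapses via $\textbf{V}^\top\textbf{V}=\textbf{I}_r$, producing the clean relation $\textbf{P}_j(\textbf{x})\textbf{V}=\textbf{P}_j^{\text{red}}(\textbf{z})$.

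Stacking these identities gives $\textbf{O}_{\text{red}}(\textbf{z})=\textbf{O}(\textbf{x})\textbf{V}$. Since the original system is locally weakly observable at $\textbf{x}$, Theorem~\ref{thm:obs} ensures that $\textbf{O}(\textbf{x})$ has full column rank $n$. Because $\textbf{V}\in\mathbb{R}^{n\times r}$ has orthonormal columns (hence rank $r$) and $\textbf{O}(\textbf{x})$ has full column rank, the standard rank inequality $\mathrm{rank}(\textbf{O}(\textbf{x})\textbf{V})=\mathrm{rank}(\textbf{V})=r$ applies, so $\textbf{O}_{\text{red}}(\textbf{z})$ has full column rank $r$. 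Invoking Theorem~\ref{thm:obs} for the reduced system yields local weak observability at $\textbf{z}=\textbf{V}^\top\textbf{x}$.

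The main obstacle will be the inductive step for general $j$: the $\textbf{F}_j$ matrices interleave Kronecker identities with $\textbf{A}_{(k)}$ at different positions, so one must be careful that each insertion of $\textbf{V}\textbf{V}^\top$ occurs only where a previous $\textbf{V}^\top$ has already factored out. A compact way to handle this is to re-express the product $\textbf{C}\textbf{A}_{(k)}\textbf{F}_2\cdots\textbf{F}_j$ using multilinear (Tucker-product) notation on a symmetric $\mathscr{A}$, which makes the cancellation $\textbf{V}^\top\textbf{V}=\textbf{I}_r$ transparent and avoids index-heavy manipulation with the unfoldings.
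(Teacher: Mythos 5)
Your proposal is correct and follows essentially the same route as the paper's own proof: both establish the factorization $\textbf{O}_{\text{red}}(\textbf{z})=\textbf{O}(\textbf{x})\textbf{V}$ by substituting the compact HOSVD (with $\textbf{V}_k=\textbf{V}$ and $\mathscr{A}_{\text{red}}=\mathscr{S}_{\text{red}}$ from symmetry) into each block $\textbf{P}_j$, and then transfer the full-rank condition using the orthonormal columns of $\textbf{V}$. Your treatment is, if anything, more explicit than the paper's about the Kronecker mixed-product bookkeeping in the $\textbf{F}_j$ terms, which the paper dispatches with ``by extending the same reasoning.''
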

\begin{proof}
    To establish that the reduced HPDS with linear output (\ref{eq:redsysoo}) is locally weakly observable at $\textbf z$, it suffices to show that the reduced observability matrix
\begin{align*}
		\textbf O_{\text{red}}(\textbf z)=\begin{bmatrix}
			\textbf Q_0(\textbf{z})\\
			\textbf Q_1(\textbf{z})\\\vdots \\
			\textbf Q_{r-1}(\textbf{z})
		\end{bmatrix}
	\end{align*}
has full rank $r$ at $\textbf{z} = \textbf{V}^\top \textbf{x}$, where $\textbf{Q}_j$ are defined analogously to $\textbf{P}_j$ using $(\textbf{A}_{\text{red}})_{(k)}$ and $\textbf{C}_{\text{red}}$. It is evident that $\textbf{Q}_0 = \textbf{P}_0 \textbf{V}$. Since $\mathscr{A}$ is symmetric, the last factor matrix in its HOSVD $\textbf{V}_k=\textbf{V}$ and $\mathscr{A}_{\text{red}}=\mathscr{S}_{\text{red}}$. With the compact HOSVD of $\mathscr{A}$, we can compute $\textbf{P}_1$ as
\begin{align*} 
			\textbf P_1
            =\textbf C\textbf V(\textbf S_\text{red})_{(k)}\sum_{q=1}^{k-1} (\textbf V^\top \textbf x)^{[q-1]}\otimes \textbf V^\top\otimes(\textbf V^\top\textbf x)^{[k-1-q]}.
            \end{align*}
Moreover, since $\textbf{C}_{\text{red}}=\textbf{C}\textbf{V}$, we can compute $\textbf{Q}_1$ as
\begin{align*}
	&\textbf Q_1=\textbf C_{\text{red}}(\textbf S_\text{red})_{(k)}\sum_{q=1}^{k-1}\textbf z^{[q-1]}\otimes \textbf I_r\otimes \textbf z^{[k-1-q]}\\&=\textbf C\textbf V(\textbf S_\text{red})_{(k)}\Big(\sum_{q=1}^{k-1} (\textbf V^\top x)^{[q-1]}\otimes \textbf V^\top\otimes (\textbf V^\top\textbf x)^{[k-1-q]}\Big)\textbf V.
\end{align*}
Thus, it holds that  $\textbf{Q}_1=\textbf{P}_1\textbf{V}$. By extending the same reasoning, it can be shown that $\textbf{Q}_j=\textbf{P}_j\textbf{V}$ for $j=0,1,\dots,n-1$. Hence, the reduced observability matrix satisfies $\textbf{O}_{\text{red}}(\textbf{z})=\textbf{O}(\textbf{x})\textbf{V}$. If the tensor-based HPDS with linear output \eqref{eq:sysoo} is locally weakly observable at a point $\textbf{x}$, then the observability matrix $\textbf{O}(\textbf{x})$ has full rank $n$ based on Theorem \ref{thm:obs}. Since the factor matrix $\textbf{V}$ contains orthonormal columns, it follows immediately that the reduced observability matrix $\textbf{O}_{\text{red}}(\textbf{z})$ has full rank $r$, and the reduced HPDS with linear output \eqref{eq:redsysoo} is also locally weakly observable at the point $\textbf{z} = \textbf{V}^\top \textbf{x}$.
\end{proof}

We consider tensor-based HPDSs with symmetric dynamic tensors and show that local weak observability is preserved in the reduced HPDS with linear output \eqref{eq:redsysoo} under the proposed HOSVD-based model reduction method. This result ensures that the essential observability characteristics of the high-dimensional system are maintained in the reduced model, supporting reliable state estimation and system monitoring.

\section{Numerical Examples}\label{sec:num}
We demonstrate the effectiveness of the proposed framework through two numerical examples. All simulations were performed in MATLAB R2023b on a machine equipped with an Apple M3 chip and 16 GB of RAM, utilizing Tensor Toolbox 3.6 \cite{osti_1230898}. The code for reproducing these experiments is available at https://github.com/XinMao0/HOSVD-MOR.

\subsection{Stability}
In this example, we evaluated the effectiveness of the proposed HOSVD-based model reduction method in preserving HPDS stability. Specifically, we considered a 6-dimensional HPDS of degree three, which can be equivalently represented by an odeco dynamic tensor $\mathscr{A}\in\mathbb{R}^{6\times 6\times 6\times 6}$. Due to space constraints, the full polynomial form of the system is omitted here and can be found in the accompanying code. The dynamic tensor $\mathscr{A}$ admits a compact HOSVD as
\[
\mathscr{A} = \mathscr{S} \times_1 \textbf V \times_2 \textbf V \times_3 \textbf V\times_4\textbf V,
\] 
where the core tensor $\mathscr S\in  \mathbb{R}^{3 \times 3 \times 3 \times 3}$ is diagonal with nonzero entries $\mathscr S(1,1,1,1)= -8.2880$,
$\mathscr S(2,2,2,2)=-3.2248$, and 
$\mathscr S(3,3,3,3)= -9.7615$, and the factor matrix \textbf{V} is given by
\begin{align*}
\textbf V=\begin{bmatrix}
-0.1743  &  0.0129 &   0.7769\\
   -0.0115  & -0.4458  & 0.2735\\
   -0.0802  &  0.0156  & -0.5407\\
   -0.5370 &  -0.1316 &  -0.1081\\
   -0.4111  &  0.8066   & 0.0856\\
    0.7112  &  0.3646 &   0.1017
\end{bmatrix}.
\end{align*}
By applying the proposed model reduction method, the reduced HPDS is simply computed as
\begin{equation*}
    \begin{cases}
        \dot{z}_1(t) &= -8.2880z^3_1(t)\\
        \dot{z}_2(t) &= -3.2248z^3_2(t)\\
        \dot{z}_3(t) &= -9.7615z^3_3(t)\\
    \end{cases}.
\end{equation*}
This reduction significantly decreases the total number of system parameters from 1296 to 81 (or just 3 when considering only the nonzero entries). Additionally, since the dynamic tensor $\mathscr{A}$ contains non-positive Z-eigenvalues, the original HPDS is stable. Therefore, by Proposition \ref{pro:stability}, the reduced model is asymptotically stable at the origin. For example, consider the initial condition $\textbf x_0=[0.3341\text{ }    2.8115 \text{ }  -1.2861  \text{ } -1.1378 \text{ }  -1.2017  \text{ } -1.8510]^\top$. As shown in Fig. \ref{fig:2}, simulated with consistent initial conditions (i.e., $\textbf{x}_0$ and $\textbf{z}_0=\textbf{V}^\top\textbf{x}_0$), both original and reduced systems converge to their respective equilibrium points. This demonstrates that the HOSVD-based reduced HPDS effectively preserves the stability properties of the original HPDS.

\begin{figure}[t]
    \centering
    \includegraphics[width=\linewidth]{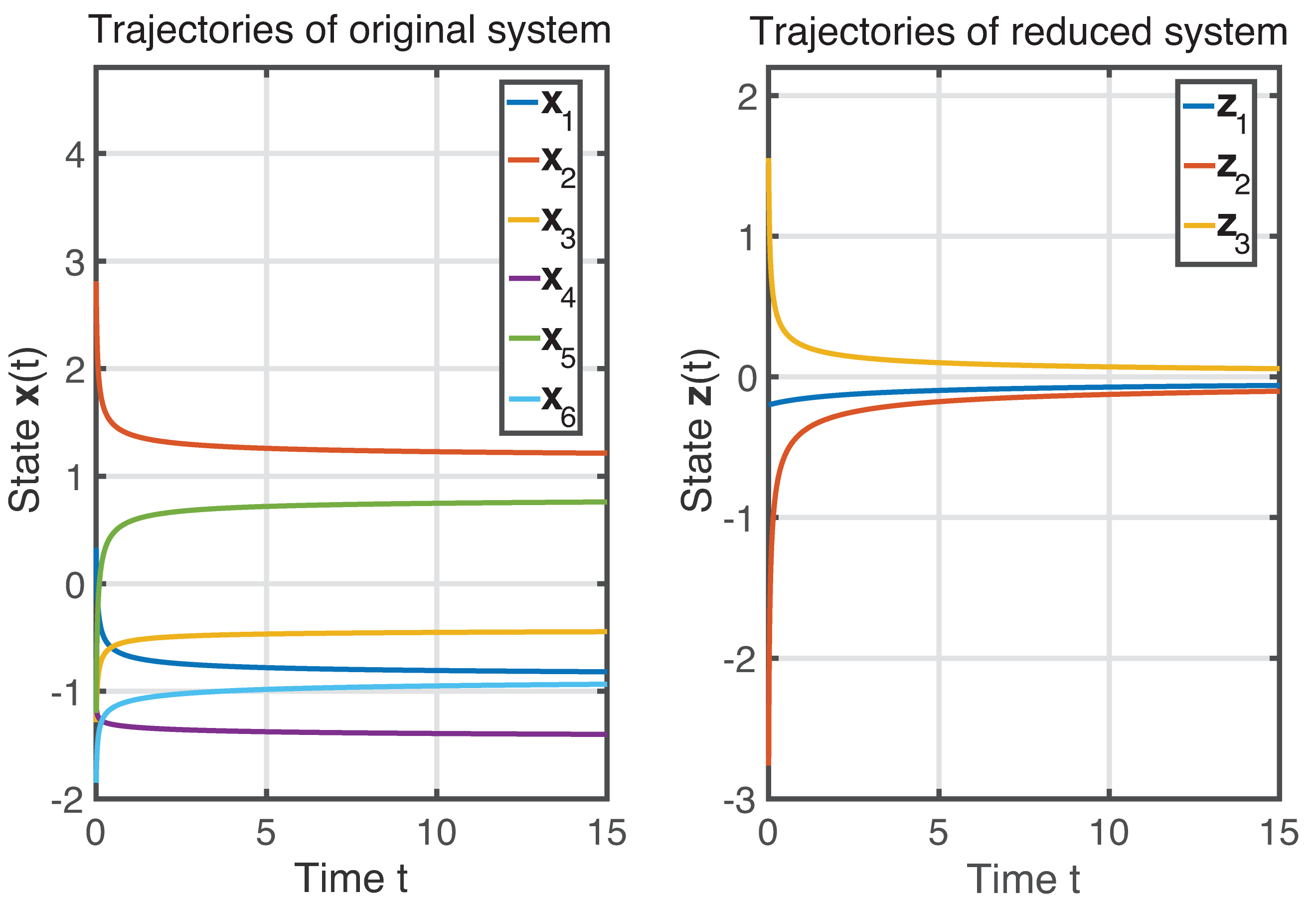}
    \caption{Trajectories of the original and reduced systems with consistent initial conditions.}
    \label{fig:2}
\end{figure}

\subsection{Controllability}
In this example, we examined the preservation of controllability under the proposed HOSVD-based model reduction method. Specifically, we considered a 12-dimensional HPDS of degree three with linear input
\[
\dot{\textbf{x}}(t) = \mathscr{A} \textbf{x}(t)^3 + \textbf{B}\textbf{u}(t),
\]
where $\mathscr{A} \in \mathbb{R}^{12 \times 12 \times 12\times 12}$ is an almost symmetric fourth-order tensor and $\textbf{B} \in \mathbb{R}^{12 \times 5}$ is a control matrix. The entries of both $\mathscr{A}$ and $ \textbf{B}$ are independently drawn from the standard normal distribution. To verify its controllability, we constructed the controllability matrix as defined in \eqref{eq:controllability} and computed its rank, which is equal to 12. Thus, the HPDS with linear input is strongly controllable. 
We then applied the proposed HOSVD-based model reduction technique to obtain a reduced model characterized by
$\mathscr{A}_{\text{red}}\in\mathbb{R}^{7\times 7\times 7\times 7}$ and $\textbf{B}_{\text{red}}\in\mathbb{R}^{7\times 5}$. This reduction reduces the total number of system parameters from 20796 to 2436, offering substantial memory and computational savings. Furthermore, we computed the controllability matrix with $\mathscr{A}_{\text{red}}$ and $\textbf{B}_{\text{red}}$ and observed that its rank is equal to 7, which implies that the reduced HPDS with linear input is also strongly controllable. Consistent with Proposition \ref{pro:control}, this result confirms the effectiveness of the HOSVD-based model reduction method in preserving the controllability of HPDSs with linear input.

\section{Conclusion}\label{sec:con}
In this article, we introduced an innovative model reduction method for homogeneous polynomial dynamical systems (HPDSs) by leveraging tools from tensor theory. Recognizing that HPDSs can be naturally represented by higher-order tensors, we exploited the compact structure revealed by higher-order singular value decomposition (HOSVD) to systematically reduce model complexity while retaining the essential characteristics of the original system. The reduced models constructed via dominant mode projection not only preserve the homogeneous polynomial structure but also maintain key system-theoretic properties, including stability, controllability, and observability. These theoretical guarantees were rigorously established through conditions linking the dynamics of the original and reduced systems.  Numerical examples demonstrate the effectiveness of the proposed reduction method in maintaining essential system behavior while significantly reducing state dimensionality. Furthermore, by employing a homogenization procedure, our method can be extended to general polynomial dynamical systems, thereby significantly broadening its utility across nonlinear modeling domains.

Future work will focus on applying the proposed HOSVD-based model reduction method to various real-world systems characterized by higher-order interactions. In particular, we aim to explore its applicability to ecological networks, where such interactions play a critical role in shaping community structure and function. By reducing the complexity of these systems while preserving essential dynamical properties, our method can improve the predictive modeling of coarse-grained dynamics and offer new insights into the identification and classification of ecological enterotypes within complex communities. Additionally, it is worthwhile to extend the proposed method by establishing connections to controllability and observability Gramians, paving the way for a tensor-based analogue of balanced truncation. Finally, integrating our method with data-driven techniques, such as the eigensystem realization algorithm \cite{juang1985eigensystem}, could enable learning reduced models directly from high-dimensional time-series data, extending the applicability of our method to systems where governing equations are partially known or entirely unavailable.

\section*{References}
\bibliographystyle{IEEEtran}
\bibliography{reference}

\end{document}